\newcounter{cnt} 
\def\mydggeometry{\makeatletter\dg@YGRID=1\dg@XGRID=20\unitlength=0.003pt\makeatother}
\makeatother \theoremstyle{remark}
\numberwithin{equation}{section}
\theoremstyle{definition} 
\newtheorem{definition}{Definition}\theoremstyle{definition}
\newtheorem{proposition}{Proposition}
\newtheorem{theorem}{Theorem}
\newtheorem{corollary}{Corollary}
\newtheorem{remark}{Remark}
\newcommand{\ra}{\rightarrow}
\newcommand{\ff}{\mathcal{F}}
\newcommand{\pp}{\mathcal{P}}
\newcommand{\mo}{\mathcal{O}}
\newcommand{\Gr}{{\operatorname*{Gr}}}
\newcommand{\wt}{\operatorname*{wt}}
\newcommand{\Mat}{{\operatorname*{Mat}}}
\newcommand{\NO}{{\operatorname*{NO}}}
\begin{document}

\date{\today}

\title{Symmetries on plabic graphs and associated polytopes}
\author{Xin Fang}
\address{Xin Fang: University of Cologne, Mathematical Institute, Weyertal 86--90, 50931 Cologne, Germany}
\email{xinfang.math@gmail.com}
\author{Ghislain Fourier}
\address{Ghislain Fourier: Leibniz Universit\"at Hannover, Institute for Algebra, Number Theory and Discrete Mathematics, Welfengarten 1, 30167 Hannover, Germany}
\email{fourier@math.uni-hannover.de}
\maketitle

\begin{abstract}
For Grassmann varieties, we explain how the duality between the Gelfand-Tsetlin polytopes and the Feigin-Fourier-Littelmann-Vinberg polytopes arises from different positive structures.
\end{abstract}

\section{Introduction}
Plabic graphs (planar bicolored graphs) are introduced by Postnikov \cite{Pos} to parametrize cells in the totally non-negative (TNN) Grassmannians $(\mathrm{Gr}_{k,n}(\mathbb{R}))_{\geq 0}$. These graphs are drawn inside a disk with boundary vertices labelled by $1,2,\ldots,n$ in a fixed orientation and internal vertices coloured by black and white. For a reduced plabic graph $\mathcal{G}$ corresponding to the top cell in the TNN-Grassmannian $(\mathrm{Gr}_{n-k,n}(\mathbb{R}))_{\geq 0}$, Rietsch and Williams \cite{RW} constructed a family of polytopes for positive integers $r$ as Newton-Okounkov bodies \cite{KK, LM09} associated to the line bundle $r\in\mathbb{Z}\cong\textrm{Pic}(\mathrm{Gr}_{n-k,n}(\mathbb{C}))$.
\par
When the plabic graph $\mathcal{G}:=\mathcal{G}_{k,n}^{rec}$ is chosen as in \cite{RW} (see Section \ref{Sec:4.2}), the corresponding Newton-Okounkov body $\textrm{NO}_{\mathcal{G}}$ is unimodularly equivalent to the Gelfand-Tsetlin polytope $\mathrm{GT}_{n-k,n}^1$.
\par
The Newton-Okounkov body is by definition a closed convex hull of points; even when it is a polytope, to read off its defining inequalities is a hard problem. In \cite{RW}, the authors used mirror symmetry of Grassmannians to obtain these inequalities from the tropicalization of the super-potential on an open set of the mirror Grassmannian arising from the Landau-Ginzburg model. By applying this symmetry, they give explicit defining inequalities of $\textrm{NO}_{\mathcal{G}}$.
\par
Lattice points in Gelfand-Tsetlin polytopes parametrize bases of finite dimensional irreducible representations of the Lie algebra $\mathfrak{sl}_n$. Motivated by a conjecture of Vinberg, another family of polytopes, called $\mathrm{FFLV}$ polytopes, is found by Feigin, the second author and Littelmann \cite{FeFoL11} whose lattice points also parametrize bases of finite dimensional irreducible representations of $\mathfrak{sl}_n$.
\par
For a plabic graph $\mathcal{G}$, its mirror $\mathcal{G}^\vee$ is defined by swapping the black/white colouring of internal vertices in $\mathcal{G}$. When the plabic graph $\mathcal{G}$ corresponds to the top cell in $(\mathrm{Gr}_{n-k,n}(\mathbb{R}))_{\geq 0}$, $\mathcal{G}^\vee$ parametrizes the top cell in $(\mathrm{Gr}_{k,n}(\mathbb{R}))_{\geq 0}$.

\begin{theorem}
The Newton-Okounkov body $\textrm{NO}_{\mathcal{G}^\vee}$ is unimodularly equivalent to $\mathrm{FFLV}_{k,n}^1$ (see Section \ref{Sec:4.1} for definition).
\end{theorem}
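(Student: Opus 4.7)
\smallskip

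\noindent\textbf{Proof proposal.} The plan is to apply the Rietsch--Williams machinery to $\mathcal{G}^\vee$ and match the resulting Newton--Okounkov body with $\mathrm{FFLV}_{k,n}^1$ by a unimodular change of coordinates read off from the mirror operation on plabic graphs.

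First I would make the setup explicit. Since $\mathcal{G}=\mathcal{G}_{k,n}^{rec}$ is the rectangular reduced plabic graph for the top cell of $(\mathrm{Gr}_{n-k,n}(\mathbb{R}))_{\geq 0}$, its mirror $\mathcal{G}^\vee$ is a reduced plabic graph for the top cell of $(\mathrm{Gr}_{k,n}(\mathbb{R}))_{\geq 0}$, so $\mathrm{NO}_{\mathcal{G}^\vee}$ lives inside a space of the right dimension $k(n-k)$. The Rietsch--Williams construction provides a network parametrization of the totally positive cell whose coordinates are the face weights of $\mathcal{G}^\vee$, and expresses the Pl\"ucker coordinates $p_J$, $J\in\binom{[n]}{k}$, as Laurent polynomials via a sum over almost perfect matchings (equivalently, flows). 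The valuation $\nu_{\mathcal{G}^\vee}$ records the exponent of the lowest order matching of each $p_J$; the Newton--Okounkov body $\mathrm{NO}_{\mathcal{G}^\vee}$ is the convex hull of the valuations of a graded family generated by the $p_J$.

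The main step is to compare $\nu_{\mathcal{G}^\vee}(p_J)$ with the standard coordinates parametrizing $\mathrm{FFLV}_{k,n}^1$. Recall that $\mathrm{FFLV}_{k,n}^1$ is cut out by Dyck path inequalities on the set of positive roots $\alpha_{i,j}$ with $i\leq k<j$ (the roots occurring in the nilradical of the parabolic for $\mathrm{Gr}_{k,n}$), and its lattice points correspond to multi-exponents of PBW monomials acting on the highest weight vector of $V(\omega_k)$. I would set up a bijection
\[
\Phi:\{\text{faces of }\mathcal{G}^\vee\}\longrightarrow\{\alpha_{i,j}:i\leq k<j\}
\]
coming from the rectangular layout of $\mathcal{G}^\vee$ (each bounded face of a rectangular plabic graph is naturally labeled by a pair $(i,j)$ in this range), and prove that for every Pl\"ucker coordinate $p_J$ the matching minimizing $\nu_{\mathcal{G}^\vee}(p_J)$ picks out exactly those faces whose labels form the FFLV multi-exponent of the extremal vector $f_J\cdot v_{\omega_k}$. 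A convenient intermediate device is the complementation $J\mapsto[n]\setminus J$ that identifies $\mathrm{Gr}_{k,n}$ with $\mathrm{Gr}_{n-k,n}$: under this duality the matchings of $\mathcal{G}^\vee$ correspond combinatorially to the \emph{complements} of the matchings of $\mathcal{G}$ that computed the GT valuations in \cite{RW}, and complementation on Young-diagram-type data is precisely the combinatorial incarnation of the $\mathrm{GT}\leftrightarrow\mathrm{FFLV}$ transform for $\mathfrak{sl}_n$. This gives both the unimodular map between the ambient lattices and the identification of extremal points.

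Finally I would promote the match on extremal lattice points to an equality of polytopes by checking the Rees-algebra (equivalently semigroup) generated by the valuations of the $p_J$ is saturated, so that $\mathrm{NO}_{\mathcal{G}^\vee}$ is the convex hull of $\nu_{\mathcal{G}^\vee}(p_J)$ for $J\in\binom{[n]}{k}$; this is where one invokes standard monomial--type results for FFLV bases on Grassmannians. The hardest step I expect is the explicit combinatorics: showing that the minimizing matching in $\mathcal{G}^\vee$ for $p_J$ is the one predicted by the FFLV Dyck path description. Equivalently, one must verify that the complementation bijection intertwines the valuation coming from flows in $\mathcal{G}$ (known to be GT by \cite{RW}) with the valuation coming from flows in $\mathcal{G}^\vee$ (conjecturally FFLV); this is the place where a careful, case-free argument using the rectangular structure of $\mathcal{G}_{k,n}^{rec}$ and the swap of colors is essential.
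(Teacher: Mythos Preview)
Your outline has a genuine gap in the ``intermediate device'' paragraph. You propose to reach the FFLV description by transporting the known identification $\mathrm{NO}_{\mathcal{G}}\cong\mathrm{GT}_{n-k,n}^1$ through a complementation map, asserting that ``complementation on Young-diagram-type data is precisely the combinatorial incarnation of the GT$\leftrightarrow$FFLV transform''. This is not correct: for a fixed fundamental weight $\varpi_k$ the polytopes $\mathrm{GT}_{k,n}^1=\mathcal{O}(P_{k,n},1)$ and $\mathrm{FFLV}_{k,n}^1=\mathcal{C}(P_{k,n},1)$ are related by Stanley's \emph{piecewise-linear} transfer map, and they are \emph{not} unimodularly equivalent as soon as the rectangular poset $P_{k,n}$ contains an ``N'' (i.e.\ whenever $k\geq 2$ and $n-k\geq 2$). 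Hence any argument that produces a linear identification between $\mathrm{NO}_{\mathcal{G}}$ and $\mathrm{NO}_{\mathcal{G}^\vee}$ via complementation of face labels or matchings, and then invokes the GT result, would force $\mathrm{GT}\cong\mathrm{FFLV}$ unimodularly --- a contradiction. Concretely, complementing matchings does not send the \emph{strongly minimal} matching for $p_{J^c}$ in $\mathcal{G}$ to the strongly minimal matching for $p_J$ in $\mathcal{G}^\vee$; the colour swap reverses the ``turn left/turn right'' rules, so the extremal flows in the two graphs are genuinely different combinatorial objects rather than complements of one another.

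The paper's proof avoids this trap by never passing through $\mathrm{GT}$. It exploits instead the characterisation of $\mathrm{FFLV}_{k,n}^1$ as the chain polytope of $P_{k,n}$: its lattice points are exactly the characteristic functions $\chi_{\mathbf a}$ of anti-chains, and these satisfy the additivity $\chi_{\mathbf a}=\sum_s\chi_{p_{i_s,j_s}}$. On the plabic side, the paper observes (Proposition~\ref{prop-add2}) that in $(\mathcal{G}_{k,n}^{rec})^\vee$ the strongly minimal flow to any $J$ decomposes as a vertex-disjoint union of the minimal single paths $\gamma_{i_s,j_s}^{\min}$, so the valuations are additive in exactly the same way. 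One then defines $\psi:\chi_{p_{i,j}}\mapsto\nu_{\mathcal{G}^\vee}(\gamma_{i,j}^{\min})$, checks by a direct row-reduction that $\det\psi=1$, and applies the abstract criterion of Proposition~\ref{prop-additive}. This additive anti-chain/minimal-path correspondence is the missing idea in your sketch; once you have it, the ``hardest step'' you anticipated (identifying the minimising matching for each $p_J$) becomes a one-line consequence of the non-crossing of the $\gamma_{i,j}^{\min}$, and no saturation or standard-monomial argument is needed.
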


Another way to relate Gelfand-Tsetlin polytopes to FFLV polytopes is via a connection between the corresponding clusters in different cluster algebras. Each reduced plabic graph $\mathcal{G}$ gives a cluster $\mathcal{C}$ consisting of Pl\"ucker coordinates $\Delta_{I_1},\ldots,\Delta_{I_m}$ where $I_1,\ldots,I_m$ are some $(n-k)$-element subsets of $[n]=\{1,2,\ldots,n\}$.
\par
For $I\subset [n]$, let $I^c$ denote its complement. Then the set $\mathcal{C}'=\{\Delta_{I_1^c},\ldots,\Delta_{I_m^c}\}$ is a cluster for $\mathrm{Gr}_{k,n}(\mathbb{C})$, corresponding to a plabic graph $\mathcal{G}^\vee$.

\begin{corollary}
The Newton-Okounkov body $\textrm{NO}_{\mathcal{G}^\vee}$ is unimodularly equivalent to $\mathrm{FFLV}_{k,n}^1$.
\end{corollary}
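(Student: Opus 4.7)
The plan is to deduce the Corollary from the Theorem by identifying the two candidate definitions of $\mathcal{G}^\vee$: the mirror (color-swap) construction used earlier, and the plabic graph associated to the complementary cluster $\mathcal{C}'$ used here. Once these are shown to agree, the Corollary reduces to the Theorem applied to this common object.

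First, I would recall Postnikov's face-labeling rule \cite{Pos}: on a reduced plabic graph $\mathcal{G}$, each face $F$ is labelled by a subset $I_F\subseteq[n]$ of size $n-k$, obtained from the strands that traverse $\mathcal{G}$ under the rule ``turn maximally right at a white vertex, maximally left at a black vertex.'' The cluster attached to $\mathcal{G}$ is $\mathcal{C}=\{\Delta_{I_F}:F\text{ a face}\}$, exactly the cluster appearing in the statement of the Corollary.

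The central combinatorial step is to show that swapping the internal colors reverses the orientation of every strand: a strand traversed backwards turns maximally left at white vertices and maximally right at black ones, which is precisely the strand rule on the color-swapped graph, and reversal interchanges the ``left side'' and ``right side'' of each face met by the strand. A direct check then shows that this replaces each label $I_F$ of size $n-k$ by its complement $I_F^c$ of size $k$. Therefore the cluster attached to $\mathcal{G}^\vee$ (in the color-swap sense) is precisely $\mathcal{C}'=\{\Delta_{I_F^c}\}$, and the two candidate definitions of $\mathcal{G}^\vee$ coincide.

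With the identification in hand, the Corollary is immediate from the Theorem, which gives $\textrm{NO}_{\mathcal{G}^\vee}\cong\mathrm{FFLV}_{k,n}^1$ up to unimodular equivalence. The main obstacle is the combinatorial identification: while the principle ``color-swap corresponds to complementation of face labels'' is well known to experts, carrying it out requires care with Postnikov's conventions for strand orientation and boundary indexing, together with the separate check (already noted in the introduction) that $\mathcal{G}^\vee$ is a reduced plabic graph parametrizing the top TNN cell of $(\mathrm{Gr}_{k,n}(\mathbb{R}))_{\geq 0}$; this last point is what licenses the application of the Theorem.
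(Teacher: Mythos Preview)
Your approach is correct and matches the paper's: the paper simply states in Section~\ref{Sec:4.2} that the face labelling of the color-swapped graph is given by complementation, $\lambda_{(\mathcal{G}_{k,n}^{rec})^\vee}(F)=(\lambda_{\mathcal{G}_{k,n}^{rec}}(F))^c$, which identifies the two candidate $\mathcal{G}^\vee$'s and reduces the Corollary to the Theorem. One small slip: the paper's trip rule (Definition~3) is right at \emph{black} and left at \emph{white}, the reverse of what you wrote, though this does not affect the logic of your reversal argument.
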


\subsection{Acknowledgement}
Part of this work was announced (see \cite{F16}) by X.F. in the workshop "PBW Structures in Representation Theory", held in MFO in March, 2016, he would like to thank MFO for the hospitality. The work of X.F. was partially supported by Alexander von Humboldt Foundation.

\section{Plabic graphs}

We recall the definition and basic properties of plabic graphs, following \cite{Pos, RW}.

\begin{definition}
A \emph{plabic graph} is an undirected planar graph $\mathcal{G}$ satisfying:
\begin{enumerate}
\item $\mathcal{G}$ is embedded in a closed disk and considered up to homotopy;
\item $\mathcal{G}$ has $n$ vertices on the boundary of the disk, called \emph{boundary vertices}, which are labeled clockwise by $1,2,\ldots, n$;
\item all other vertices of $\mathcal{G}$ are strictly inside the disk, they are called \emph{internal vertices} and coloured in black and white;
\item each boundary vertex is incident to a single edge.
\end{enumerate}
\end{definition}

\begin{center}
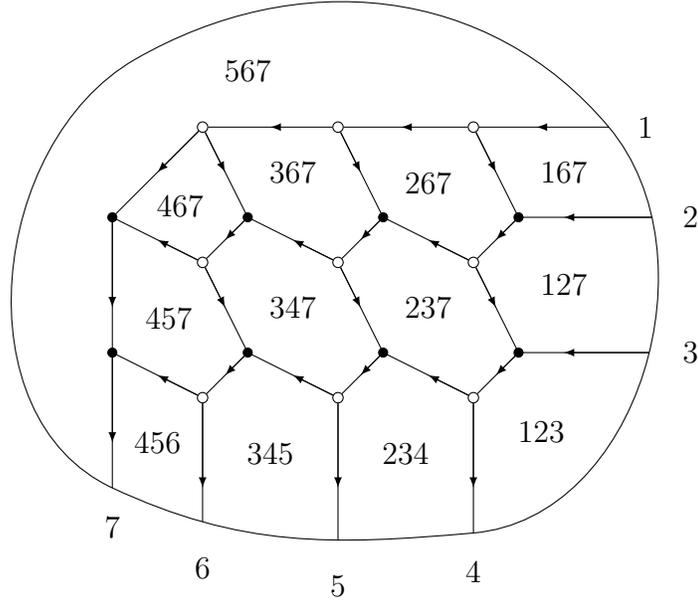
\begin{figure}
\begin{tikzpicture}[scale=.6]

    \node[right] at (6.4,3) {{$1$}};
    \node[right] at (7.4,1) {{$2$}};
    \node[right] at (7.4,-2) {{$3$}};
    \node[below] at (3,-6.4) {{$4$}};
    \node[below] at (0,-6.7) {{$5$}};
    \node[below] at (-3,-6.3) {{$6$}};
    \node[below] at (-5,-5.4) {{$7$}};

    \draw (6,3) -- (3,3) -- (0,3) -- (-3,3) -- (-5,1) -- (-5,-2) -- (-5,-5);
    \draw (6.96,1) -- (4,1) -- (3,0) -- (1,1) -- (0,0) -- (-2,1) -- (-3,0) -- (-2,-2) -- (-3,-3) -- (-3,-5.75);
    \draw (6.90,-2) -- (4,-2) -- (3,-3) -- (1,-2) -- (0,-3) -- (0,-6.16); 
    \draw (-5,1) -- (-3,0);
    \draw (-3,3) -- (-2,1);
    \draw (0,3) -- (1,1);
    \draw (3,3) -- (4,1);
    \draw (-3,-3) -- (-5,-2);
    \draw (0,0) -- (1,-2);
    \draw (0,-3) -- (-2,-2);
    \draw (3,0) -- (4,-2);
    \draw (3,-3) -- (3,-6);
    
	\draw[-latex] (4.5,3) -- (4.4,3);
	\draw[-latex] (1.5,3) -- (1.4,3);
	\draw[-latex] (-1.4,3) -- (-1.5,3);
	\draw[-latex] (-3,3) -- (-4,2);
	\draw[-latex] (-5,1) -- (-5,-1);
	\draw[-latex] (-5,-2) -- (-5,-4);
	\draw[-latex] (-3,-3) -- (-3,-5);
	\draw[-latex] (0,-3) -- (0,-5);
	\draw[-latex] (3,-3) -- (3,-5);
	\draw[-latex] (6.96,1) -- (5,1);
	\draw[-latex] (6.90,-2) -- (5,-2);
	\draw[-latex] (4,1) -- (3.5,0.5);
	\draw[-latex] (3,3) -- (3.5,2);
	\draw[-latex] (0,3) -- (0.5,2);
	\draw[-latex] (-3,3) -- (-2.5,2);
	\draw[-latex] (1,1) -- (0.5,0.5);
	\draw[-latex] (-2,1) -- (-2.5,0.5);
	\draw[-latex] (0,0) -- (-1,0.5);
	\draw[-latex] (-3,0) -- (-4,0.5);
	\draw[-latex] (3,0) -- (2,0.5);
	\draw[-latex] (-3,0) -- (-2.5,-1);
	\draw[-latex] (0,0) -- (0.5,-1);
	\draw[-latex] (3,0) -- (3.5,-1);
	\draw[-latex] (4,-2) -- (3.5,-2.5);
	\draw[-latex] (1,-2) -- (0.5,-2.5);
	\draw[-latex] (-2,-2) -- (-2.5,-2.5);
	\draw[-latex] (3,-3) -- (2,-2.5);
	\draw[-latex] (0,-3) -- (-1,-2.5);
	\draw[-latex] (-3,-3) -- (-4,-2.5);

    \draw (6,3) to [out=-50,in=5] (3,-6) to [out=185,in=-25]  (-5,-5) to [out=155,in=-150] (-4.5,4.5) to [out=30,in=130] (6,3);

    \node at (5,2) {{$167$}};
    \node at (5,-0.5) {{$127$}};
    \node at (4.5,-3.75) {{$123$}};
    \node at (2,1.75) {{$267$}};
    \node at (2,-1) {{$237$}};
    \node at (1.5,-4.25) {{$234$}};
    \node at (-1,2) {{$367$}};
    \node at (-1,-1) {{$347$}};
    \node at (-1.5,-4.25) {{$345$}};
    \node at (-3.5,1.25) {{$467$}};
    \node at (-3.75,-1.25) {{$457$}};
    \node at (-4,-4) {{$456$}};
    \node at (-2,4.25) {{$567$}};

        \draw[fill, white] (3,3) circle [radius=.115];
    \draw (3,3) circle [radius=0.115];
        \draw[fill, white] (0,3) circle [radius=.115];
    \draw (0,3) circle [radius=0.115];
        \draw[fill, white] (-3,3) circle [radius=.115];
    \draw (-3,3) circle [radius=0.115];
    
    \draw[fill] (4,1) circle [radius=0.1];
    \draw[fill] (1,1) circle [radius=0.1];
    \draw[fill] (-2,1) circle [radius=0.1];
    \draw[fill] (-5,1) circle [radius=0.1];
    
        \draw[fill, white] (3,0) circle [radius=.115];
    \draw (3,0) circle [radius=0.115];
        \draw[fill, white] (0,0) circle [radius=.115];
    \draw (0,0) circle [radius=0.115];
        \draw[fill, white] (-3,0) circle [radius=.115];
    \draw (-3,0) circle [radius=0.115];    
    
    \draw[fill] (4,-2) circle [radius=0.1];
    \draw[fill] (1,-2) circle [radius=0.1];
    \draw[fill] (-2,-2) circle [radius=0.1];
    \draw[fill] (-5,-2) circle [radius=0.1];
    
        \draw[fill, white] (3,-3) circle [radius=.115];
    \draw (3,-3) circle [radius=0.115];
        \draw[fill, white] (0,-3) circle [radius=.115];
    \draw (0,-3) circle [radius=0.115];
        \draw[fill, white] (-3,-3) circle [radius=.115];
    \draw (-3,-3) circle [radius=0.115];
   
\end{tikzpicture}
\caption{\label{Plabic}Plabic graph $\mathcal{G}$ of trip permutation $\pi_{4,7}$ and face labelling $\lambda_\mathcal{G}$}
\end{figure}
\end{center}

In \cite{Pos} (see also \cite{RW}), there are three \textit{local moves} defined on plabic graphs: gluing two vertices of the same colour; removing redundant vertices and mutating a square. For a plabic graph $\mathcal{G}$, let $\ff(\mathcal{G})$ denote the set of its faces, which is invariant under the local moves.

\begin{definition}
A plabic graph $\mathcal{G}$ is called \emph{reduced} if there are no parallel edges
    \begin{tikzpicture}[scale=0.7]
    \draw (0,0) -- (1,0);
    \draw (2,0) -- (3,0);

    \draw (1,0.06) -- (2,0.06);
    \draw (1,-.06) -- (2,-.06);

    \draw[fill] (1,0) circle [radius=0.1];
    \draw[fill, white] (2,0) circle [radius=0.1];
    \draw (2,0) circle [radius=0.1];

    \end{tikzpicture}
after applying any sequences of local moves.
\end{definition}

\begin{definition}
Let $\mathcal{G}$ be a reduced plabic graph. The \emph{trip} $T_i$ starting from a boundary vertex $i$ is the path going through the edges of $\mathcal{G}$, obeying the following rules:
\begin{enumerate}
\item at each internal black vertex, the path turns to the rightmost direction;
\item at each internal white vertex, the path turns to the leftmost direction.
\end{enumerate}
The trip $T_i$ ends at a boundary vertex ${\pi(i)}$. We associate in this way a \emph{trip permutation} $\pi_\mathcal{G}:=(\pi(1),\ldots,\pi(n))$ to $\mathcal{G}$. Let $\pi_{k,n}=(n-k+1,n-k+2,\ldots,n,1,2,\ldots,n-k)$.  The \emph{face labelling} of $\mathcal{G}$ is the injective map $\lambda_\mathcal{G}:\ff(\mathcal{G})\ra\binom{[n]}{k}$ (the set of $k$-element subsets of $\{1, \ldots, n\}$) defined as follows: for a face $F\in\ff(\mathcal{G})$, $\lambda_\mathcal{G}(F)$ consists of those $i$ such that $F$ is to the left of the trip $T_i$. We set $\mathcal{V}_\mathcal{G}:=\lambda_\mathcal{G}(\ff(\mathcal{G}))$.
\end{definition}
See Figure \ref{Plabic} for an example.

\section{Polytopes arising from plabic graphs}

We associate polytopes to plabic graphs following \cite{RW}. Let $\mathbb{K}=\mathbb{R}$ or $\mathbb{C}$ be the base field.

\subsection{Positive Grassmannians}

For $0<k<n$, let $\Mat_{k,n}$ denote the set of $k\times n$-matrices with entries in $\mathbb{K}$. For $J\in \binom{[n]}{k}$ and $A\in\Mat_{k,n}$, let $\Delta_J(A)$ denote the maximal minor of $A$ corresponding to columns in $J$. 
\par
Let $\Gr_{k,n}$ be the Grassmann variety embedded into $\mathbb{P}^{N-1}$ via the Pl\"ucker embedding where $N=\binom{n}{k}$. The minors $\{\Delta_J\mid J\in\binom{[n]}{k}\}$ give the Pl\"ucker coordinates on $\Gr_{k,n}$.
When the base field is $\mathbb{R}$, the \emph{totally non-negative (resp. totally positive) Grassmannian} $(\Gr_{k,n}(\mathbb{R}))_{\geq 0}$ consists of those elements in $\Gr_{k,n}$ having non-negative (resp. positive) Pl\"ucker coordinates.

\subsection{Perfect orientations}
To study flow models on plabic graphs, we fix a perfect orientation $\mathcal{O}$ on $\mathcal{G}$. Such an orientation requires at each black (resp. white) internal vertex there is exactly one edge going out (resp. going in). It is shown in \cite{PSW09} that each reduced plabic graph admits an acyclic perfect orientation. Once such an orientation is fixed, we denote the source set by $I_\mathcal{O}:=\{i\in[n]\mid i\ \text{is a boundary source of }\mathcal{O}\}$; its complement $I^c_{\mathcal{O}}$ is the set of boundary sinks.
\par
For $I\in\binom{[n]}{k}$, let $x_I$ be a variable. For $i\in I_\mo$ and $j\in I^c_\mo$, let $\pp_{i,j}$ be the set of directed paths from $i$ to $j$. For such a directed path $\gamma$, let $\ff_\gamma(\mathcal{G})$ denote the set of faces to the left of $\gamma$.
A flow $\mathfrak{F}$ from $I_\mo$ to $J\in\binom{[n]}{k}$ is a collection of pairwise vertex-disjoint directed paths in $\mathcal{G}$ going from  $I_\mo\backslash (I_\mo\cap J)$ to $J\backslash (I_\mo\cap J)$. 
\par
For a directed path $\gamma\in\pp_{i,j}$, we define the \emph{weight} of $\gamma$ in $\mathbb{C}[x_I\mid I\in\binom{[n]}{k}]$ by:
$$\wt(\gamma):=\prod_{F\in\ff_\gamma(\mathcal{G})}x_{\lambda_\mathcal{G}(F)}.$$ 
The weight of a flow is the product of the weights of the paths it contains. For $J\in\binom{[n]}{k}$, we define $P_J$ to be the sum of the weights of all flows from $I_\mo$ to $J$.

For a reduced plabic graph $\mathcal{G}$ of trip permutation $\pi_{n-k,k}$ with perfect orientation $\mo$, there exists only one face $F_\emptyset$ to the right of all directed paths with $\lambda_\mathcal{G}(F_\emptyset)=\{n-k+1,\cdots,n\}$. We set $\mathcal{V}_\mathcal{G}^\circ:=\mathcal{V}_\mathcal{G}\backslash\{\lambda_\mathcal{G}(F_\emptyset)\}$, $\Delta_\mathcal{G}:=\{x_I\mid I\in\mathcal{V}_\mathcal{G}\}$ and $\Delta_\mathcal{G}^\circ:=\{x_I\mid I\in\mathcal{V}^\circ_\mathcal{G}\}$.

\begin{theorem}[\cite{Pos, Tal08}]\label{Thm:PosTal}
Let $\mathbb{X}:=\Gr_{k,n}(\mathbb{C})$ and $\mathbb{C}(\mathbb{X})$ be the field of rational functions on $\mathbb{X}$. There exists an isomorphism of fields:
$$\mathbb{C}(\mathbb{X})\cong \mathbb{C}(x_I\mid x_I\in \Delta_\mathcal{G}^\circ),\ \ \Delta_J\mapsto P_J.$$
\end{theorem}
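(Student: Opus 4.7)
The plan is to establish the theorem in three stages: construct a rational map from face weights to $\Gr_{k,n}$, verify that it is birational, and derive the field isomorphism. The key inputs are the Lindström--Gessel--Viennot (LGV) lemma and Postnikov's parametrization of the top TNN cell \cite{Pos}.

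I first note that $P_{I_\mo} = 1$, since the only flow from $I_\mo$ to itself is the empty collection of paths. Since $\mo$ is acyclic, the LGV lemma identifies $P_J$ with a signed $k\times k$ minor of a ``boundary measurement matrix'' $M_\mo$ whose entries are sums of path weights between individual boundary vertices. As minors of a common matrix, the tuple $(P_J)_{J\in\binom{[n]}{k}}$ automatically satisfies the Plücker relations, so the assignment
\[
\Phi:(\mc^*)^{\mathcal{V}_\mathcal{G}^\circ}\dashrightarrow \Gr_{k,n},\qquad (x_I)\mapsto [P_J(x)]_J,
\]
defines a rational map whose pullback sends $\Delta_J$ to $P_J$.

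Next, I verify $\Phi$ is birational. Postnikov's theorem \cite{Pos} asserts that, after a unique gauge-fixing of edge weights, the boundary measurement construction yields a real-analytic diffeomorphism between the positive face torus and the top cell of $(\Gr_{k,n}(\mathbb{R}))_{\geq 0}$; the face-weight parametrization employed here is precisely this gauge-fixed form. The complexification is therefore dominant (the top cell is Zariski dense in $\mathbb{X}$) and generically one-to-one, hence birational. The dimension count is consistent: a reduced plabic graph of trip permutation $\pi_{n-k,k}$ has exactly $k(n-k)+1$ faces \cite{Pos}, so $|\mathcal{V}_\mathcal{G}^\circ|=k(n-k)=\dim \mathbb{X}$. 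Birationality immediately gives the desired field isomorphism $\Phi^*:\mc(\mathbb{X})\xrightarrow{\sim}\mc(x_I\mid x_I\in\Delta_\mathcal{G}^\circ)$ with $\Delta_J\mapsto P_J$.

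I expect the main obstacle to lie in the first step: certifying that the flow polynomials $P_J$ satisfy the Plücker relations in a way compatible with the face variables $x_I$. A direct combinatorial attack (for instance, verifying the three-term relations $P_{Jij}P_{Jkl}-P_{Jik}P_{Jjl}+P_{Jil}P_{Jjk}=0$ by a sign-reversing involution on pairs of flows) is conceivable but intricate. The cleaner route is the one taken in \cite{Tal08}: choose edge weights so that the products along paths reproduce $\prod_F x_{\lambda_\mathcal{G}(F)}^{\pm 1}$, express each $P_J$ as a minor of $M_\mo$, and then invoke the fact that minors of any matrix automatically satisfy the Plücker relations. The bookkeeping of signs induced by the acyclic orientation, and the combinatorial identification of face weights with path-weight products, is the technical core of the argument.
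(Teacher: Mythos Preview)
The paper does not supply its own proof of this theorem: it is stated with attribution to \cite{Pos, Tal08} and used as a black box, so there is no in-paper argument to compare against. Your sketch is a faithful outline of the proof in those references---build the boundary measurement matrix, invoke LGV to realize the $P_J$ as its maximal minors (hence Pl\"ucker-compatible), and then use Postnikov's positroid-cell parametrization to deduce birationality---and the technical caveats you flag (sign bookkeeping, translating edge weights to face weights) are exactly the points handled in \cite{Tal08}.
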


The choice of the perfect orientation $\mo$ will only change the formula of $P_J$ by a scalar. We always assume that the choice $I_\mo=\{1,2,\cdots,k\}$ is made.
\par
Let $<$ be a total order on $\Delta_\mathcal{G}$. It induces a term order $<$ on monomials in $\Delta_\mathcal{G}$ by taking the lexicographic order. Let $f$ be a polynomial in Pl\"ucker coordinates of $\mathbb{X}$. By Theorem \ref{Thm:PosTal}, $f$ can be written as a polynomial in $\Delta_\mathcal{G}^\circ$:
$$f=\sum_{\bold{a}\in\mathbb{Z}^{\mathcal{V}_\mathcal{G}^\circ}}c_\bold{a}x^\bold{a},\ \ \text{where}\ \ x^\bold{a}=\prod_{I\in\mathcal{V}_\mathcal{G}^\circ}x_I^{a_I}\text{  if  }\mathbf{a}=(a_I)_{I\in\mathcal{V}_\mathcal{G}^\circ}.$$

Let $\nu_\mathcal{G}:\mathbb{C}(\mathbb{X})^*\ra \mathbb{Z}^{\mathcal{V}_\mathcal{G}^\circ}$ be the minimal term valuation on $\mathbb{C}(\mathbb{X})$ with respect to the above total order.

Let $\mathcal{L}_k$ denote the very ample line bundle on $\mathbb{X}$ generating $\mathrm{Pic}(\mathbb{X})$. It gives the Pl\"ucker embedding. The space of global sections $\mathrm{H}^0(\mathbb{X},\mathcal{L}_k^r)$, as a representation of $\mathrm{GL}_n(\mathbb{C})$, is isomorphic to $V(r\varpi_k)^*$, where the latter is the dual of the finite dimensional irreducible representation of highest weight $r\varpi_k$ ($\varpi_k$ is the $k$-th fundamental weight). The homogeneous coordinate ring $\mathbb{C}[\mathbb{X}]:=\bigoplus_{r\geq 0}\mathrm{H}^0(\mathbb{X},\mathcal{L}_k^r)$ is embedded into $\mathbb{C}(\mathbb{X})$ by sending $s\in\mathrm{H}^0(\mathbb{X},\mathcal{L}_k^r)$ to $s/\Delta_{[k]}^r$.

\begin{definition}
The Newton-Okounkov body associated to $\mathcal{L}_k$, $\nu_\mathcal{G}$ and the lexicographic order is defined by:
$$\mathrm{NO}_\mathcal{G}:=\overline{\mathrm{conv}\left(\bigcup_{r\geq 1}\left\{\nu_\mathcal{G}(s)/r\mid s\in\mathrm{H}^0(\mathbb{X},\mathcal{L}_k^r)\backslash\{0\}\right\}\right)}.$$
\end{definition}
We set $\mathrm{NO}_\mathcal{G}^1:=\mathrm{conv}(\{\nu_\mathcal{G}(s)\mid s\in\mathrm{H}^0(\mathbb{X},\mathcal{L}_k)\backslash\{0\}\})\subseteq \mathrm{NO}_\mathcal{G}$. For the issue on whether this inclusion is proper (i.e, whether $\mathrm{NO}_\mathcal{G}$ is integral), see \cite[Theorem 15.17]{RW}.

\section{Duality between Newton-Okounkov bodies}

\subsection{Order polytopes and chain polytopes}\label{Sec:4.1}

Let $(P,\leq_P)$ be a poset with covering relation $\prec$. Stanley \cite{St} associated two Ehrhart equivalent polytopes, the order polytope and the chain polytope, to this poset. We recall here a dilated version of them. 
\par
For $r\in\mathbb{N}_{>0}$, we denote the dilated order polytope $\mo(P,r)$ to be the representation of the poset $P$ on the interval $[0,r]$ with the order on real numbers:
$$\mo(P,r):=\mathrm{Hom}_{\mathrm{Poset}}((P,\leq_P),([0,r],\leq))\subseteq\mathbb{R}^P.$$
The dilated chain polytope $\mathcal{C}(P,r)\subseteq\mathbb{R}^P$ has the following facets: for any $p\in P$, $x_p\geq 0$; for any maximal chain $p_1\prec\cdots\prec p_s$, $x_{p_1}+\cdots+x_{p_s}\leq r$, where $x_p$ is the coordinate of $p\in P$ in $\mathbb{R}^P$.
\par
Stanley \cite{St} showed that the integral points of the chain polytope $\mathcal{C}(P,1)$ are given by the characteristic functions of the anti-chains in $P$. In particular, the element $p\in P$ gives an integral point $\chi_p$ in $\mathcal{C}(P,1)$.
\par
In the following, we fix $1 \leq k \leq n-1$, and let $(P_{k,n}, \leq)$ be the poset given by the elements $p_{i,j}$, where $1 \leq i \leq k$ and $k+1 \leq j \leq n$, with covering relations
\[
 p_{i+1,j}\prec p_{i,j}  \text{ and }\  p_{i, j+1} \prec p_{i,j}.
\]
The polytope $\mathcal{O}(P_{k,n},r)$ is the Gelfand-Tsetlin polytope $\mathrm{GT}_{k,n}^r$ for the representation $V(r\varpi_k)$ of $\mathfrak{sl}_n$ (\cite{GT50});
while $\mathcal{C}(P_{k,n},r)$ is the Feigin-Fourier-Littelmann-Vinberg polytope $\mathrm{FFLV}_{k,n}^r$ (\cite{ABS11, FeFoL11}) of the same representation.

For a polytope $Q\subset\mathbb{R}^m$, let $S(Q):=Q\cap\mathbb{Z}^m$ denote the set of integral points in it. The following integer decomposition properties hold: the $r$-fold Minkowski sum of $S(\mo(P_{k,n},1))$ (resp. $S(\mathcal{C}(P_{k,n},1)$)) coincides with $S(\mo(P_{k,n},r))$ (resp. $S(\mathcal{C}(P_{k,n},r))$).

Moreover, if $\mathbf{a} = \{p_{i_1, j_1}, \ldots, p_{i_s, j_s}\}$ is an anti-chain in $P_{k,n}$, then one has for the corresponding lattice points $\chi_\mathbf{a} = \chi_{p_{i_1, j_1}} + \ldots + \chi_{p_{i_s, j_s}}\in\mathcal{C}(P_{k,n},1)$.

\begin{proposition}\label{prop-additive}
Suppose $Q$ is an integral polytope in $\mathbb{R}^{P_{k,n}}$ such that 
\begin{itemize}
\item $\# S(Q)= \# S(\mathrm{FFLV}_{k,n}^1)$;
\item there is a parametrization of the lattice points in $Q$ by anti-chains in $P_{k,n}$ sending an anti-chain $\mathbf{a}$ to $y_\mathbf{a}\in\mathbb{R}^{P_{k,n}}$ such that for any anti-chain $\mathbf{a} = \{p_{i_1, j_1}, \ldots, p_{i_s, j_s}\}$ the relation $y_\mathbf{a} = y_{p_{i_1, j_1}} + \ldots + y_{p_{i_s, j_s}}$ holds;
\item there is a linear map of determinant $1$ expressing $y_{p_{i,j}}$ in terms of $\chi_{p_{i,j}}$.
\end{itemize}
Then the assignment $\chi_{p_{i,j}} \mapsto y_{p_{i,j}}$ induces a unimodularly equivalence from $\mathrm{FFLV}_{k,n}^1$ to $Q$.
\end{proposition}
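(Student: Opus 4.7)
The plan is to extend the prescribed correspondence $\chi_{p_{i,j}} \mapsto y_{p_{i,j}}$ by linearity to a map $\varphi: \mathbb{R}^{P_{k,n}} \to \mathbb{R}^{P_{k,n}}$ and to verify directly that $\varphi$ restricts to a bijection from $\mathrm{FFLV}_{k,n}^1$ onto $Q$ while preserving the ambient lattice $\mathbb{Z}^{P_{k,n}}$. Hypothesis (iii) tells us immediately that $\varphi$ is a $\mathbb{Z}$-linear automorphism of the lattice; the real task is to identify its image on the polytope.

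First I would transfer the lattice points. By Stanley's result recalled in Section \ref{Sec:4.1}, every integer point of $\mathrm{FFLV}_{k,n}^1 = \mathcal{C}(P_{k,n},1)$ is the characteristic function $\chi_\mathbf{a}$ of a unique antichain $\mathbf{a} = \{p_{i_1,j_1},\ldots,p_{i_s,j_s}\}$, and decomposes additively as $\chi_\mathbf{a} = \chi_{p_{i_1,j_1}} + \cdots + \chi_{p_{i_s,j_s}}$. Applying $\varphi$ and invoking the additive relation in hypothesis (ii) gives
\[
\varphi(\chi_\mathbf{a}) = y_{p_{i_1,j_1}} + \cdots + y_{p_{i_s,j_s}} = y_\mathbf{a} \in S(Q).
\]
Since $\varphi$ is a lattice automorphism it is injective on integer points, and hypothesis (i) equates the cardinalities of $S(\mathrm{FFLV}_{k,n}^1)$ and $S(Q)$, so the restriction $\varphi|_{S(\mathrm{FFLV}_{k,n}^1)}$ is a bijection onto $S(Q)$.

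It then remains to upgrade this bijection on lattice points to an equality of polytopes $\varphi(\mathrm{FFLV}_{k,n}^1) = Q$. For one inclusion, Stanley's description realizes $\mathrm{FFLV}_{k,n}^1$ as the convex hull of the $\chi_\mathbf{a}$, so
\[
\varphi(\mathrm{FFLV}_{k,n}^1) = \mathrm{conv}\{\varphi(\chi_\mathbf{a})\} = \mathrm{conv}\{y_\mathbf{a}\} = \mathrm{conv}(S(Q)) \subseteq Q.
\]
The reverse inclusion follows because $Q$ is integral: it equals the convex hull of its vertices, each of which is a lattice point and therefore already lies in $\varphi(\mathrm{FFLV}_{k,n}^1)$. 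Combined with the lattice-preservation from hypothesis (iii) this establishes the claimed unimodular equivalence.

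The argument is essentially formal once the three hypotheses are in place, and I do not foresee a genuine technical obstacle. The one delicate point to articulate is that the additivity in hypothesis (ii) is precisely what is needed to transport Stanley's antichain decomposition of $S(\mathcal{C}(P_{k,n},1))$ through a \emph{linear} map; the substantive work in any application of this proposition will lie not in the proof itself but in exhibiting, for a given $Q$, both the antichain parametrization and the unimodular change of coordinates on the generators.
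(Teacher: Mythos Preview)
The paper states this proposition without proof, so there is nothing to compare against; your argument correctly supplies what the paper omits. One small point worth making explicit: when you assert that hypothesis (iii) ``immediately'' makes $\varphi$ a $\mathbb{Z}$-linear automorphism, determinant $1$ alone is not enough---you also need integer entries, which follows because each $y_{p_{i,j}}$ is the image of a singleton antichain under the parametrization in (ii) and hence lies in $S(Q)\subset\mathbb{Z}^{P_{k,n}}$. With that observation in hand your proof is complete and exactly the kind of formal verification the paper evidently intended the reader to fill in.
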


\subsection{Duality of polytopes from positive structures}\label{Sec:4.2}

We refer to \cite[Section 7.1]{RW} for the definition of the rec-plabic graph $\mathcal{G}_{k,n}^{rec}$. For example, the plabic graph in Figure \ref{Plabic} is $\mathcal{G}_{4,7}^{rec}$.

The following has been shown in {\cite[Lemma 15.2]{RW}}: 
\begin{proposition}
The Newton-Okounkov body $\NO_{\mathcal{G}_{k,n}^{rec}}$ is unimodularly equivalent to the Gelfand-Tsetlin polytope $\mathrm{GT}_{n-k,n}^1$.
\end{proposition}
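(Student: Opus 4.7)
The plan is to identify the lattice points of $\NO_{\mathcal{G}_{k,n}^{rec}}^1$ with those of $\GT_{n-k,n}^1=\mo(P_{n-k,n},1)$ via an explicit combinatorial bijection, and then to propagate this identification to all dilations so as to obtain a unimodular equivalence of the full polytopes.

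First I would analyse the combinatorics of the rec-plabic graph. Its set $\mathcal{V}^\circ_{\mathcal{G}_{k,n}^{rec}}$ of nontrivial face labels has cardinality $k(n-k)=|P_{n-k,n}|$; the faces of $\mathcal{G}_{k,n}^{rec}$ sit in a regular $(n-k)\times k$ grid, which gives a natural bijection $\varphi\colon P_{n-k,n}\xrightarrow{\sim}\mathcal{V}^\circ_{\mathcal{G}_{k,n}^{rec}}$ sending the covering relations of $P_{n-k,n}$ to adjacencies between faces.

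Next I would compute $\nu_{\mathcal{G}}(\Delta_J)$ for every $J\in\binom{[n]}{n-k}$. By Theorem \ref{Thm:PosTal}, $\Delta_J=P_J$ is a sum over flows in $\mathcal{G}_{k,n}^{rec}$ from $I_\mo=\{1,\ldots,n-k\}$ to $J$. The rectangular grid structure lets one index the non-crossing flows via Lindstr\"om--Gessel--Viennot by tuples of non-intersecting lattice paths, so the lex-minimum flow can be identified explicitly and its exponent vector read off from the set of faces it leaves on its left. I would then check that, under $\varphi$, this exponent vector is the indicator function of the order ideal in $P_{n-k,n}$ associated to $J$ under the standard bijection between $(n-k)$-subsets of $[n]$ and order ideals of $P_{n-k,n}$; that is, it is precisely the lattice point of $\GT_{n-k,n}^1$ labelling the Pl\"ucker basis vector $\Delta_J$.

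Finally, since $\mathrm{H}^0(\mathbb{X},\mathcal{L}_{n-k})$ is spanned by Pl\"ucker coordinates and has dimension $\#S(\GT_{n-k,n}^1)=\binom{n}{n-k}$, the assignment $\Delta_J\mapsto\nu_\mathcal{G}(\Delta_J)$ exhausts all integral points of $\GT_{n-k,n}^1$ at level $1$. To pass to every level $r\ge 1$, the cleanest route is to check that $\nu_\mathcal{G}$ satisfies the Khovanskii/SAGBI condition for the Pl\"ucker generators, so that $\mathrm{gr}_{\nu_\mathcal{G}}\mathbb{C}[\mathbb{X}]$ is the semigroup algebra on the cone over $\GT_{n-k,n}^1$; flatness of the resulting toric degeneration then forces $\NO_{\mathcal{G}_{k,n}^{rec}}=\GT_{n-k,n}^1$ as convex bodies, and the induced coordinate change is the essentially triangular comparison between $\varphi$ and the standard $P_{n-k,n}$-coordinates on the order polytope, hence unimodular. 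I expect the main obstacle to be the explicit identification of the lex-minimum flow in $\mathcal{G}_{k,n}^{rec}$ together with the verification of the Khovanskii condition at higher levels; both come down to combinatorial bookkeeping on the grid, but that is where the genuine work lies (and where the original argument of \cite{RW} instead invokes superpotential/mirror machinery).
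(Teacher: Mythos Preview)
The paper does not prove this proposition at all: it is stated immediately after the sentence ``The following has been shown in \cite[Lemma 15.2]{RW}'' and no argument is given. The entire content of the paper's treatment is the citation.

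Your proposal therefore supplies far more than the paper does. As a sketch of how one might actually establish the result, it is broadly sound and in fact tracks the strategy carried out in \cite{RW}: identify the valuation images of the Pl\"ucker coordinates with the lattice points of the order polytope via the grid structure of the rec-plabic graph, and then upgrade this to the full Newton--Okounkov body. You are also right that the nontrivial work sits in two places you flag --- pinning down the lex-minimal flow for each $J$ and establishing the finite-generation/integrality needed to pass from $\NO^1$ to $\NO$ --- and that \cite{RW} handles the latter via the superpotential description rather than a direct SAGBI verification. But none of this is in the present paper, so there is no proof here to compare your proposal against; the authors simply import the statement from \cite{RW}.
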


We define the dual rec-plabic graph $(\mathcal{G}_{k,n}^{rec})^\vee$ by swapping the black/white colour of the internal vertices, reversing the perfect orientation and changing the boundary labelling $r\mapsto r+n-k\text{ mod }n$. The dual rec-plabic graph is a plabic graph of trip permutation $\pi_{k,n}$ with a perfect orientation. The face labelling in $(\mathcal{G}_{k,n}^{rec})^\vee$ of a face $F$ in $\mathcal{G}_{k,n}^{rec}$ is given by the complement:
$$\lambda_{(\mathcal{G}_{k,n}^{rec})^\vee}(F)=(\lambda_{\mathcal{G}_{k,n}^{rec}}(F))^c.$$

Notice that in $(\mathcal{G}_{k,n}^{rec})^\vee$, for a boundary source $i$ and a boundary sink $j$, the flow from $i$ to $j$ of strongly minimal weight (we borrow the notion of strongly minimal from \cite[Definition 5.13]{RW}) is given by a "vertical" path starting from $i$ followed by a "horizontal" path ending in $j$. We denote this path by $\gamma^{\min}_{i,j}$ (see Figure \ref{Plabicrev} for an example for $\gamma^{\min}_{3,6}$).

\begin{center}
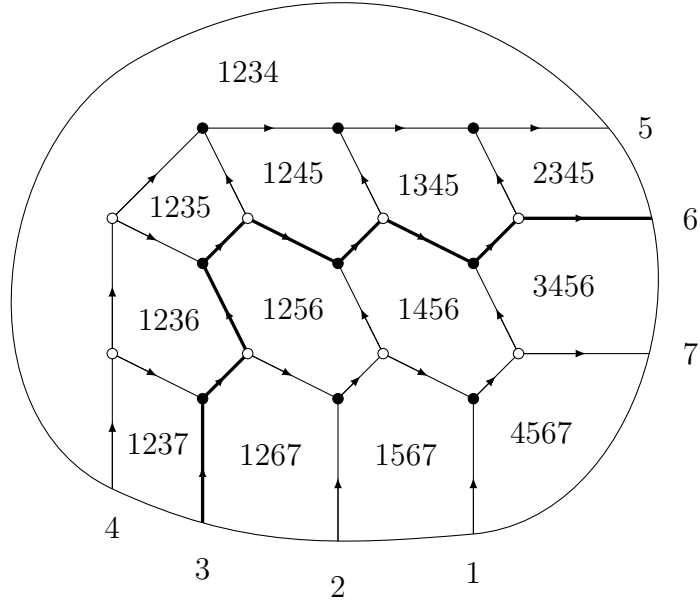
\begin{figure}
\begin{tikzpicture}[scale=.6]

    \node[right] at (6.4,3) {{$5$}};
    \node[right] at (7.4,1) {{$6$}};
    \node[right] at (7.4,-2) {{$7$}};
    \node[below] at (3,-6.4) {{$1$}};
    \node[below] at (0,-6.7) {{$2$}};
    \node[below] at (-3,-6.3) {{$3$}};
    \node[below] at (-5,-5.4) {{$4$}};

    \draw (6,3) -- (3,3) -- (0,3) -- (-3,3) -- (-5,1) -- (-5,-2) -- (-5,-5);
    \draw [line width=1.25pt] (6.96,1) -- (4,1);
    \draw [line width=1.25pt] (4,1) -- (3,0);
    \draw [line width=1.25pt] (3,0) -- (1,1);
    \draw [line width=1.25pt] (1,1) -- (0,0);
    \draw [line width=1.25pt] (0,0) -- (-2,1); 
    \draw [line width=1.25pt] (-2,1) -- (-3,0);
    \draw [line width=1.25pt] (-3,0) -- (-2,-2);
    \draw [line width=1.25pt] (-3,-3) -- (-2,-2);    
    \draw [line width=1.25pt] (-3,-3) -- (-3,-5.75);
    \draw (6.90,-2) -- (4,-2) -- (3,-3) -- (1,-2) -- (0,-3) -- (0,-6.16); 
    \draw (-5,1) -- (-3,0);
    \draw (-3,3) -- (-2,1);
    \draw (0,3) -- (1,1);
    \draw (3,3) -- (4,1);
    \draw (-3,-3) -- (-5,-2);
    \draw (0,0) -- (1,-2);
    \draw (0,-3) -- (-2,-2);
    \draw (3,0) -- (4,-2);
    \draw (3,-3) -- (3,-6);
    
	\draw[-latex]  (4.4,3) -- (4.5,3);
	\draw[-latex]  (1.4,3) -- (1.5,3);
	\draw[-latex]  (-1.5,3) -- (-1.4,3);
	\draw[-latex] (-5,1) -- (-4,2);
	\draw[-latex] (-5,-2) -- (-5,-0.5);
	\draw[-latex] (-5,-5) -- (-5,-3.5);
	
	\draw[-latex] (-3,-5.75) -- (-3,-4.5);
	\draw[-latex] (0,-6.16) -- (0,-4.75);
	\draw[-latex] (3,-6) -- (3,-4.75);
	\draw[-latex] (4,1) -- (5.5,1);
	\draw[-latex] (4,-2) -- (5.5,-2);
	\draw[-latex] (3,0) -- (3.5,0.5);
	\draw[-latex] (4,1) -- (3.5,2);
	\draw[-latex] (1,1) -- (0.5,2);
	\draw[-latex] (-2,1) -- (-2.5,2);
	\draw[-latex] (0,0) -- (0.5,0.5);
	\draw[-latex] (-3,0) -- (-2.5,0.5);
	\draw[-latex] (-2,1) -- (-1,0.5);
	\draw[-latex] (-5,1) -- (-4,0.5);
	\draw[-latex] (1,1) -- (2,0.5);
	\draw[-latex] (-2,-2) -- (-2.5,-1);
	\draw[-latex] (1,-2) -- (0.5,-1);
	\draw[-latex] (4,-2) -- (3.5,-1);
	\draw[-latex] (3,-3) -- (3.5,-2.5);
	\draw[-latex] (0,-3) -- (0.5,-2.5);
	\draw[-latex] (-3,-3) -- (-2.5,-2.5);
	\draw[-latex] (1,-2) -- (2,-2.5);
	\draw[-latex] (-2,-2) -- (-1,-2.5);
	\draw[-latex] (-5,-2) -- (-4,-2.5);

    \draw (6,3) to [out=-50,in=5] (3,-6) to [out=185,in=-25]  (-5,-5) to [out=155,in=-150] (-4.5,4.5) to [out=30,in=130] (6,3);

    \node at (5,2) {{$2345$}};
    \node at (5,-0.5) {{$3456$}};
    \node at (4.5,-3.75) {{$4567$}};
    \node at (2,1.75) {{$1345$}};
    \node at (2,-1) {{$1456$}};
    \node at (1.5,-4.25) {{$1567$}};
    \node at (-1,2) {{$1245$}};
    \node at (-1,-1) {{$1256$}};
    \node at (-1.5,-4.25) {{$1267$}};
    \node at (-3.5,1.25) {{$1235$}};
    \node at (-3.75,-1.25) {{$1236$}};
    \node at (-4,-4) {{$1237$}};
    \node at (-2,4.25) {{$1234$}};

        \draw[fill] (3,3) circle [radius=.115];
        \draw[fill] (0,3) circle [radius=.115];
        \draw[fill] (-3,3) circle [radius=.115];
        
    \draw[fill, white] (4,1) circle [radius=0.1];
        \draw (4,1) circle [radius=0.115];
    \draw[fill, white] (1,1) circle [radius=0.1];
        \draw (1,1) circle [radius=0.115];
    \draw[fill, white] (-2,1) circle [radius=0.1];
	    \draw (-2,1) circle [radius=0.115];
    \draw[fill, white] (-5,1) circle [radius=0.1];
  	  \draw (-5,1) circle [radius=0.115];
    
        \draw[fill] (3,0) circle [radius=.115];
        \draw[fill] (0,0) circle [radius=.115];
        \draw[fill] (-3,0) circle [radius=.115];
    
    \draw[fill, white] (4,-2) circle [radius=0.1];
  	  \draw (4,-2) circle [radius=0.115];
    \draw[fill, white] (1,-2) circle [radius=0.1];
   		 \draw (1,-2) circle [radius=0.115];
    \draw[fill, white] (-2,-2) circle [radius=0.1];
    		\draw (-2,-2) circle [radius=0.115];
    \draw[fill, white] (-5,-2) circle [radius=0.1];
	    \draw (-5,-2) circle [radius=0.115];

        \draw[fill] (3,-3) circle [radius=.115];
        \draw[fill] (0,-3) circle [radius=.115];
        \draw[fill] (-3,-3) circle [radius=.115];
   
\end{tikzpicture}
\caption{\label{Plabicrev}Plabic graph $\mathcal{G}^\vee$, with a minimal path from $3$ to $6$}
\end{figure}
\end{center}

\begin{proposition} \label{prop-add2}
In the dual rec-plabic graph $(\mathcal{G}_{k,n}^{rec})^\vee$, let $\{i_1 < \ldots < i_r\}$ be a subset of the sources and $\{j_1 > \ldots > j_r\}$ be a subset of the sinks. Let $J = \{i_1, \ldots, i_r, j_1, \ldots, j_r\}$. Then the unique flow $\mathcal{F}(J)$ of strongly minimal 
weight is given by $\{ \gamma^{\min}_{i_1,j_1}, \ldots, \gamma^{\min}_{i_r,j_r} \}$.
\end{proposition}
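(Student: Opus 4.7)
The plan is to combine three ingredients: the single-path minimality of L-shaped paths (asserted in the paragraph preceding the proposition), a planar non-crossing argument in the spirit of Lindstr\"om--Gessel--Viennot, and an upgrade step lifting single-path domination to multi-flow strong minimality. First I would verify that $\{\gamma^{\min}_{i_l,j_l}\mid l=1,\ldots,r\}$ is actually a vertex-disjoint flow. In $(\mathcal{G}_{k,n}^{rec})^\vee$ the sources lie on one arc of the boundary disk and the sinks on the complementary arc; each minimum-weight path $\gamma^{\min}_{i,j}$ uses a distinguished ``column'' leaving $i$ and a distinguished ``row'' reaching $j$. Under the pairing $i_l\leftrightarrow j_l$ determined by the hypotheses $i_1<\cdots<i_r$ and $j_1>\cdots>j_r$, distinct indices $l$ occupy distinct columns and distinct rows of the rec-grid, so the L-shapes share no internal vertex; a short case check on the layout of $\mathcal{G}_{k,n}^{rec}$ confirms this.

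Next I would show that every competing vertex-disjoint flow from $\{i_1,\ldots,i_r\}$ to $\{j_1,\ldots,j_r\}$ realises the same pairing. Since these sources and sinks sit on complementary arcs of the disk boundary and the graph is embedded in the disk, the Jordan curve theorem forces any vertex-disjoint planar flow to realise the unique non-crossing matching, namely $i_l\mapsto j_l$. Hence every such flow decomposes as $r$ individual paths $\gamma_l\colon i_l\rightsquigarrow j_l$. Applying the asserted single-path minimality, the weight of $\gamma^{\min}_{i_l,j_l}$ dominates that of $\gamma_l$ in the strongly-minimal sense; multiplying across $l$ yields strong minimality of the L-shape flow, and uniqueness transfers from the individual L-shapes.

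The main obstacle I anticipate is the non-crossing step: the Lindstr\"om--Gessel--Viennot philosophy makes the statement intuitive, but one has to check that the face labelling and source/sink placement of $(\mathcal{G}_{k,n}^{rec})^\vee$ are compatible with the standard planar-DAG hypotheses under which Jordan-curve arguments apply, and in particular that the pairing forced by non-crossing matches the indexing of the hypothesis. A subsidiary point is to verify that the multiset of faces to the left of a flow is additive under vertex-disjoint concatenation, so that single-path domination upgrades multiplicatively to the whole flow.
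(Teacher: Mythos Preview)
Your approach is correct and follows the same line as the paper's proof, which consists of the single sentence: ``Since the paths of strongly minimal weight do not intersect, the flow of minimal weight is given by the union of these paths.'' The non-crossing matching and multiplicative-upgrade steps you flag as potential obstacles are precisely the details the paper leaves implicit.
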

\begin{proof}
Since the paths of strongly minimal weight do not intersect, the flow of minimal weight is given by the union of these paths. 
\end{proof}

\begin{theorem}\label{rec-fflv}
The Newton-Okounkov body $\NO_{(\mathcal{G}_{k,n}^{rec})^\vee}$ is unimodularly equivalent to the FFLV polytope $\mathrm{FFLV}_{k,n}^1$.
\end{theorem}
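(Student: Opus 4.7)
The plan is to apply Proposition~\ref{prop-additive} to $Q := \NO^1_{(\mathcal{G}_{k,n}^{rec})^\vee}$, and then bootstrap the resulting $r=1$ equivalence to the full Newton--Okounkov body via the integer decomposition property recorded after the definition of $\mathcal{C}(P_{k,n},r)$. The cardinality hypothesis is almost immediate: by Theorem~\ref{Thm:PosTal}, every nonzero $s\in H^0(\mathbb{X},\mathcal{L}_k)$ satisfies $\nu_{(\mathcal{G}_{k,n}^{rec})^\vee}(s) = \nu_{(\mathcal{G}_{k,n}^{rec})^\vee}(\Delta_J)$ for some $J\in\binom{[n]}{k}$, while Proposition~\ref{prop-add2} pins down the strongly minimal flow $\mathcal{F}(J)$ as the vertex-disjoint union of the L-paths $\gamma^{\min}_{i_a,j_a}$, making the $\binom{n}{k}$ valuations pairwise distinct. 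Hence $\#S(Q)=\binom{n}{k}=\#S(\mathrm{FFLV}^1_{k,n})$. For $J\in\binom{[n]}{k}$, write $[k]\setminus J=\{i_1<\cdots<i_r\}$ and $J\cap\{k+1,\ldots,n\}=\{j_1>\cdots>j_r\}$, and associate the anti-chain $\mathbf{a}(J):=\{p_{i_1,j_1},\ldots,p_{i_r,j_r}\}$; the opposite orderings force pairwise incomparability in $P_{k,n}$, so $J\mapsto \mathbf{a}(J)$ is a bijection onto the anti-chains.

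For the additivity hypothesis, set $y_J:=\nu_{(\mathcal{G}_{k,n}^{rec})^\vee}(\Delta_J)$ and $y_{p_{i,j}}:=y_{J^{(i,j)}}$ for $J^{(i,j)}:=([k]\setminus\{i\})\cup\{j\}$. Since the weight of a flow is the product of the weights of its constituent paths, Proposition~\ref{prop-add2} yields that the exponent vector of $\wt(\mathcal{F}(J))$ is the sum of the exponent vectors of the individual $\wt(\gamma^{\min}_{i_a,j_a})$; translated into valuations, $y_{\mathbf{a}(J)}=\sum_a y_{p_{i_a,j_a}}$, which is exactly the required additivity.

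For the third hypothesis, identify $\mathcal{V}^\circ_\mathcal{G}$ with $\{p_{i,j}:1\le i\le k,\,k+1\le j\le n\}$ via the grid indexing of the faces of $(\mathcal{G}_{k,n}^{rec})^\vee$, so that $\chi_{p_{i,j}}$ is a standard basis vector in $\mathbb{R}^{P_{k,n}}$ and $y_{p_{i,j}}$ is the indicator vector of those faces lying to the left of the single L-path $\gamma^{\min}_{i,j}$. I expect the main obstacle to be showing that, with respect to a lexicographic refinement of the partial order on $P_{k,n}$, the resulting incidence matrix is unipotent-triangular (so of determinant $1$); this is a purely combinatorial verification from the explicit L-shape of each $\gamma^{\min}_{i,j}$ inside the rec-plabic graph. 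Granting it, Proposition~\ref{prop-additive} yields $Q\cong\mathrm{FFLV}^1_{k,n}$, and integer decomposition together with the multiplicativity of $\nu$ propagates this to $\NO_{(\mathcal{G}_{k,n}^{rec})^\vee}\cong \mathrm{FFLV}^1_{k,n}$.
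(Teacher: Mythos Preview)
Your proposal is correct and follows essentially the same route as the paper: apply Proposition~\ref{prop-additive} to $Q=\NO^1_{(\mathcal{G}_{k,n}^{rec})^\vee}$, using Proposition~\ref{prop-add2} to obtain the additivity $y_J=\sum_a y_{p_{i_a,j_a}}$ and a direct combinatorial check (the paper phrases it as ``row operations'', you as unipotent triangularity) that $\chi_{p_{i,j}}\mapsto y_{p_{i,j}}$ has determinant~$1$. You are more explicit than the paper about the anti-chain bijection and about the passage from $\NO^1$ to $\NO$ via the integer decomposition property, which the paper compresses into the phrase ``since $\psi$ is linear''.
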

\begin{proof}
We first set $Q=\mathrm{NO}_{(\mathcal{G}_{k,n}^{rec})^\vee}^1$ and verify the conditions in Proposition~\ref{prop-additive} to show that $Q$ is unimodularly equivalent to $\mathrm{FFLV}_{k,n}^1$ by a linear map.
\par
The polytope $Q$ is a lattice polytope satisfying $\#S(Q)=\#S(\mathrm{FFLV}_{k,n}^1)$ (the valuation images of the Pl\"ucker coordinates are different). Let $f_{i\times j} := \nu_{(\mathcal{G}_{k,n}^{rec})^\vee} (\gamma_{i,j}^{\min})$. We define a linear map 
\[\psi: \mathrm{FFLV}_{k,n}^1 \longrightarrow Q, \; \chi_{p_{i,j}} \mapsto f_{i\times j}.\]
We label a basis on the right hand side indexed by the faces of the plabic graph and a basis on the left hand side indexed by the elements $p_{i,j}$. Using row operations, one can show straightforwardly, that the matrix of $\psi$ corresponding to these bases has determinant $1$.
\par
Since $\psi$ is linear, $\mathrm{NO}_{(\mathcal{G}_{k,n}^{rec})^\vee}$ is unimodularly equivalent to $\mathrm{FFLV}_{k,n}^1$.
\end{proof}

\begin{remark}
We set $(\mathcal{G}_{k,n}^{rec})_{w_0}$ to be the plabic graph obtained from $\mathcal{G}_{k,n}^{rec}$ by replacing each $ I = \{i_1, \ldots, i_{n-k}\}$ by $I_{w_0} = \{ n +1 - i_{n-k}, \ldots, n+1 - i_1\}$. This is nothing but applying a maximal Green sequence of mutations \cite{Kel} to the cluster variables in $\mathcal{G}_{k,n}^{rec}$. Then one can show similarly to the theorem above, that the Newton-Okounkov body $\NO_{(\mathcal{G}_{k,n}^{rec})_{w_0}}$ is unimodularly equivalent to $\mathrm{FFLV}_{n-k,n}^1$.
\end{remark}

\end{document}